\numberwithin{equation}{section}
\def\pmod #1{\ ({\rm{mod}}\ #1)}
\theoremstyle{plain}
\newtheorem{theorem}{Theorem}
\newtheorem{lemma}{Lemma}
\newtheorem{conjecture}{Conjecture}
\theoremstyle{definition}
\newtheorem{remark}{Remark}
\patchcmd{\@settitle}{\uppercasenonmath\@title}{}{}{}
\patchcmd{\@setauthors}{\MakeUppercase}{}{}{}
\patchcmd{\section}{\scshape}{}{}{}
\begin{document}

\title
[{Note on a conjecture of Ram\'{\i}rez Alfons\'{\i}n and Ska{\l}ba}]
{Note on a conjecture of Ram\'{\i}rez Alfons\'{\i}n and Ska{\l}ba}
\author
[T. Dai, \quad Y. Ding \quad H. Wang]
{Tianhan Dai, \quad Yuchen Ding$^\dagger$ \quad {\it and} \quad Hui Wang}

\address{(Tianhan Dai) School of Mathematical Sciences, Yangzhou University, Yangzhou 225002, People's Republic of China}
\email{3408606588@qq.com}

\address{(Yuchen Ding) School of Mathematical Sciences,  Yangzhou University, Yangzhou 225002, People's Republic of China}
\email{ycding@yzu.edu.cn}

\address{(Hui Wang) School of Mathematics, Shandong University, Jinan 250100, China, People's Republic of China}
\email{wh0315@mail.sdu.edu.cn}

\thanks{$^\dagger$ ORCID:  0000-0001-7016-309X \quad $\&$ \quad corresponding author}

\keywords{Frobenius problem, Siegel--Walfisz theorem, primes in short intervals}
\subjclass[2010]{11D07, 11N13}

\begin{abstract}
Let $2< a<b$ be two relatively prime integers and $g=ab-a-b$. It is proved that there exists at least one prime $p\le g$ of the form $p=ax+by~(x,y\in \mathbb{Z}_{\ge 0})$, which confirms a 2020 conjecture of Ram\'{\i}rez Alfons\'{\i}n and Ska{\l}ba.
\end{abstract}
\maketitle

\section{Introduction}
Let $a_1,a_2,\dots,a_k$ ($k\ge 2$) be a set of positive integers with $\gcd(a_1,a_2,\dots,a_k)=1$. The Diophantine Frobenius Problem (see, e.g. \cite{RA}) asks what is the largest integer $g_{a_1,a_2,\dots,a_k}$ which cannot be represented as
$$
a_1x_1+a_2x_2+\cdots+a_kx_k \quad \big(x_1,x_2,\dots,x_k\in \mathbb{Z}_{\ge 0}\big),
$$
where $\mathbb{Z}_{\ge 0}$ is the set of nonnegative integers.
The problem may seem at a glance not so hard, but the solution to it is actually extremely difficult. The closed form for $k=2$ was given by Sylvester \cite{Sylvester} who showed that $g_{a,b}=ab-a-b$. For $k=3$, closed forms involving particular cases were widely studied (see, e.g. \cite{RA}). Sylvester also obtained that for any $0\le s\le g_{a,b}$ exactly one of $s$ and $g_{a,b}-s$ could be expressed as the form
$
ax+by~ (x,y\in \mathbb{Z}_{\ge 0}).
$
Thus, exactly half of the integers in the interval $[0,g_{a,b}]$ have the desired expression. For comprehensive literature on the Diophantine Frobenius Problem, one can refer to the excellent monograph \cite{RA} of Ram\'{\i}rez Alfons\'{\i}n.

In a recent article, Ram\'{\i}rez Alfons\'{\i}n and Ska{\l}ba \cite{RS} considered the Diophantine Frobenius Problem in primes. Specifically, they were interested in the primes $p\le g_{a,b}$ which can be expressed in the form $p=ax+by$ for $x,y\in \mathbb{Z}_{\ge 0}$. Let us write $\pi_{a,b}$ for the number of such primes.
For any $\varepsilon>0$, Ram\'{\i}rez Alfons\'{\i}n and Ska{\l}ba proved that there is some constant $C(\varepsilon)>0$ so that
\begin{align}\label{eq1-1}
\pi_{a,b}>C(\varepsilon)\frac{g_{a,b}}{(\log g_{a,b})^{2+\varepsilon}}.
\end{align}
This means that $\pi_{a,b}>0$ for all sufficiently large $g_{a,b}$. A number of computer experiments then led them to the following conjecture \cite[Conjecture 2]{RS}.
\begin{conjecture}\label{conjecture1}
Let $2< a<b$ be two relatively prime integers. Then $\pi_{a,b}>0$.
\end{conjecture}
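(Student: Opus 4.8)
The plan is to produce a representable prime by splitting into two ranges according to the size of $a$ relative to $\log g$, where $g=g_{a,b}=ab-a-b$, using a different analytic input in each. I would first record two elementary facts. Since $2<a<b$ and $\gcd(a,b)=1$, every positive integer of the form $ax+by$ is at least $\min(a,b)=a$, so the interval $(0,a)$ contains no representable integer; combined with Sylvester's symmetry (for $0\le s\le g$ exactly one of $s$ and $g-s$ is representable), this shows that a prime $p\le g$ is representable \emph{if and only if} $g-p$ is not representable. In particular every integer in $(g-a,g)$ is representable, since for such a $p$ the complement $g-p$ lies in $(0,a)$.

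Next, for \emph{small} $a$, say $3\le a\le 2\log g$, I would count primes in the single progression $p\equiv b\pmod a$: any such prime with $b\le p\le g$ is representable, because $p=a\cdot\frac{p-b}{a}+b$ with $\frac{p-b}{a}\in\mathbb{Z}_{\ge0}$. As $\gcd(a,b)=1$ and $a$ is small, the Siegel--Walfisz theorem gives
\[
\pi(g;a,b)-\pi(b;a,b)\gg\frac{g-b}{\varphi(a)\log g},
\]
and since $b=(g+a)/(a-1)$ forces $g-b\ge \tfrac12 g-O(1)$ for $a\ge3$, this count is positive once $g$ is large; hence a representable prime $\le g$ exists in this range.

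For \emph{large} $a$, say $a>2\log g$, the fully representable interval $(g-a,g)$ is too short, its length $a-1<a<\sqrt{3g}$ falling well below the length $g^{7/12+\varepsilon}$ needed to force a prime. Instead I would take the longer short interval $I=(g-H,g)$ with $H=g^{3/5}$. A prime $p\in I$ is non-representable only when $g-p\in(0,H)$ is representable, and the number of representable integers in $(0,H)$ is $\ll\frac{H^2}{ab}+\frac{H}{a}\ll\frac{H^2}{g}+\frac{H}{a}$. Meanwhile the prime number theorem in short intervals (valid for $H\ge g^{7/12+\varepsilon}$) yields $\pi(g)-\pi(g-H)\gg H/\log g$. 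Since $H^2/g=g^{1/5}=o(H/\log g)$ and $H/a<H/(2\log g)$, the primes in $I$ strictly outnumber the non-representable ones, so $I$ contains a representable prime.

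\textbf{The main obstacle} is effectivity. The Siegel--Walfisz step carries an ineffective constant (a possible Siegel zero), so on its own it proves the conjecture only for $g$ larger than an \emph{unspecified} bound, which cannot settle the remaining pairs. To obtain an explicit threshold $g_0$ I would replace it, in the small-$a$ range, by an effective Linnik-type lower bound $\pi(x;q,a)\gg x/(\varphi(q)\log x)$ valid for $x\ge q^{L}$ with $L$ effective---admissible here since $a\le 2\log g$ makes $a^{L}=o(g)$---whereas the large-$a$ input is already effective, resting only on zero-density estimates and the classical zero-free region of $\zeta$. One then verifies the conjecture for the finitely many coprime pairs $2<a<b$ with $ab-a-b\le g_0$ by direct computation. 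Ensuring that the two ranges overlap around $a\asymp\log g$, and that $g_0$ stays within computational reach, are the delicate points.
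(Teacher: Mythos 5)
Your outline reproduces the paper's proof skeleton almost exactly: split on the size of $a$ relative to $\log g$; for small $a$ count primes $p\equiv b\pmod{a}$ with $b<p\le g$ (each such prime is representable); for large $a$ use Sylvester's symmetry to bound the representable primes in an interval $(g-H,g]$ from below by $\pi(g)-\pi(g-H)$ minus the number of representable integers in $[0,H]$; then close the finitely many remaining pairs by computer. Both halves are correct asymptotically. The genuine gap is the one you flag yourself --- effectivity --- and your plan for resolving it fails in the large-$a$ case. Choosing $H=g^{3/5}$ commits you to the Huxley-range short-interval prime number theorem, and ``effective in principle via zero-density estimates'' is not the same as explicit: no explicit version of the short-interval PNT at exponent $7/12$ or $3/5$ exists. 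The best fully explicit short-interval results known (via explicit zero-density work, e.g.\ Dudek's prime in $(x,x+3x^{2/3}]$) require thresholds of the shape $x\ge \exp(\exp(33))$, so the $g_0$ your route produces would be astronomically beyond any ``verify the rest by computation'' step. The small-$a$ fix has a similar, milder problem: a quantitative Linnik-type bound $\pi(x;q,m)\gg x/(\varphi(q)\log x)$ for $x\ge q^{L}$ with explicit constants is not available off the shelf, although in your regime $a\le 2\log g$ the exceptional-zero analysis is effective and explicit results do exist.

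The technical choice that makes the paper's proof actually close --- and which your sketch misses --- is to take $H=g/\log g$ instead of $g^{3/5}$. The representable integers in $[0,H]$ then number at most roughly $H^2/(2ab)+H/a\approx g/(2(\log g)^2)+g/(K(\log g)^2)$ when $a\ge K\log b$, while $\pi(g)-\pi(g-H)\approx g/(\log g)^2$, so positivity needs only the \emph{classical} prime number theorem with explicit error terms (the paper uses Panaitopol's inequalities), no short-interval technology at all. For small $a$ the paper uses the explicit Siegel--Walfisz-type bounds of Bennett--Martin--O'Bryant--Rechnitzer, valid precisely when $a\ll\log b$, with concrete constants. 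These inputs bring the thresholds down to around $5\cdot 10^8$ (plus bounded ranges of $(a,b)$, handled by further explicit lemmas and about a week of computation). So: right architecture, but with your analytic inputs the theorem would remain unproved; replacing $g^{3/5}$ by $g/\log g$ and citing explicit versions of the classical results is exactly what turns the sketch into a proof.
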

\begin{remark}\label{remark1}
In their original statement of \cite[Conjecture 2]{RS}, Ram\'{\i}rez Alfons\'{\i}n and Ska{\l}ba required $2\le a<b$. For $a=2$ and $b=3$, we have $g_{2,3}=2\cdot 3-2-3=1$. So, there would be no primes in the interval $[0,g_{2,3}]$. For $a=2$, $b>3$, it is obvious that $2=2\cdot1 +b\cdot0$ is a prime in the interval $[0,g_{2,b}]$. Therefore, without loss of generality we will only consider the case for $a>2$.
\end{remark}
On observing the antisymmetry property of the integers with the form  $cx+dy~(x,y\in \mathbb{Z}_{\geqslant0})$ discovered by Sylvester, Ram\'{\i}rez Alfons\'{\i}n and Ska{\l}ba also posed another conjecture \cite[Conjecture 3]{RS}.
\begin{conjecture}\label{conjecture2}
Let $2< a <b$ be two relatively prime integers, then
$$
\pi_{a,b}\sim\frac{\pi(g_{a,b})}{2} \quad (\text{as}~a\rightarrow\infty),
$$
where $\pi(t)$ is the number of primes up to $t$.
\end{conjecture}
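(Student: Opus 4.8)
The plan is to translate representability into a congruence-plus-size condition modulo $a$ and then count primes in the resulting progressions. Let $\bar b$ denote the inverse of $b$ modulo $a$. If $p\le g_{a,b}$ is prime and $p\nmid ab$, then any representation $p=ax+by$ with $x,y\ge 0$ must have $0\le y\le a-1$ (since $by\le p<ab$), and this $y$ is forced to equal the least positive residue of $p\bar b$ modulo $a$; hence $p$ is representable if and only if $p\ge b\,s(p)$, where $s(p)\in\{1,\dots,a-1\}$ is that residue. The primes dividing $ab$ number $O(\log(ab))$ and may be set aside. Since $s(p)$ depends only on the residue $r$ of $p$ modulo $a$, grouping the remaining primes by $r$ gives
\begin{align}\label{eq-plan1}
\pi_{a,b}=\sum_{\substack{r=1\\ \gcd(r,a)=1}}^{a-1}\#\bigl\{p\le g_{a,b}:\ p\equiv r\pmod a,\ p\ge b\,s(r)\bigr\}+O(\log(ab)),
\end{align}
where $s(r)\in\{1,\dots,a-1\}$ is the least positive residue of $r\bar b$ modulo $a$ and $r\mapsto s(r)$ permutes the reduced residues modulo $a$.

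I would then estimate each inner count by the prime number theorem for arithmetic progressions. Writing $\mathrm{Li}$ for the logarithmic integral, the expected shape is
\begin{align}\label{eq-plan2}
\#\bigl\{p\le g_{a,b}:\ p\equiv r\pmod a,\ p\ge b\,s(r)\bigr\}=\frac{\mathrm{Li}(g_{a,b})-\mathrm{Li}(b\,s(r))}{\varphi(a)}+E(a,b,r).
\end{align}
Summing over the reduced residues and re-indexing by $s=s(r)$ through the permutation above yields, up to the accumulated error,
\begin{align}\label{eq-plan3}
\pi_{a,b}=\mathrm{Li}(g_{a,b})-\frac{1}{\varphi(a)}\sum_{\substack{s=1\\ \gcd(s,a)=1}}^{a-1}\mathrm{Li}(b\,s)+\sum_{r}E(a,b,r).
\end{align}

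For the main term I would exploit the very symmetry used by Sylvester. The involution $s\mapsto a-s$ on reduced residues carries the threshold $b\,s$ to $b(a-s)=ab-b\,s$, which differs from $g_{a,b}-b\,s$ by $a+b=o(g_{a,b})$; pairing the two terms and using that $\mathrm{Li}(t)+\mathrm{Li}(g_{a,b}-t)$ averages to $\mathrm{Li}(g_{a,b})$ over $t\in[0,g_{a,b}]$ (because $\int_0^{g_{a,b}}\mathrm{Li}(t)\,dt\sim\tfrac12\,g_{a,b}\,\mathrm{Li}(g_{a,b})$), together with the equidistribution of reduced residues in $\{1,\dots,a-1\}$, I expect
\begin{align}\label{eq-plan4}
\frac{1}{\varphi(a)}\sum_{\substack{s=1\\ \gcd(s,a)=1}}^{a-1}\mathrm{Li}(b\,s)\sim\frac{ab}{2\log(ab)}\sim\tfrac12\,\mathrm{Li}(g_{a,b}).
\end{align}
Granting \eqref{eq-plan4} and the negligibility of $\sum_r E(a,b,r)$, the identity \eqref{eq-plan3} gives $\pi_{a,b}\sim\mathrm{Li}(g_{a,b})-\tfrac12\mathrm{Li}(g_{a,b})=\tfrac12\mathrm{Li}(g_{a,b})\sim\tfrac12\pi(g_{a,b})$, which is the assertion.

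The crux, and the step I expect to be the true obstacle, is controlling $\sum_r E(a,b,r)$, i.e. securing \eqref{eq-plan2} for the modulus $a$ up to $x=g_{a,b}\asymp ab$. When $b$ is enormous relative to $a$---precisely when $a\le(\log g_{a,b})^A$, say $\log b\ge a^{\varepsilon}$---the Siegel--Walfisz theorem delivers \eqref{eq-plan2} uniformly in $r$ with an error $O\bigl(x\exp(-c\sqrt{\log x})\bigr)$ per class, and summing over the fewer than $(\log x)^A$ reduced classes keeps $\sum_r E(a,b,r)=o(\pi(g_{a,b}))$. The difficulty is that the extremal instances have $b=a+1$, whence $g_{a,b}\asymp a^2$ and the modulus satisfies $a\asymp g_{a,b}^{1/2}$, far beyond the Siegel--Walfisz range; for such $a\asymp b$ the equidistribution in every single progression is known only under the Generalized Riemann Hypothesis, and because $a$ is fixed there is no averaging over moduli to let Bombieri--Vinogradov intervene. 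Consequently I would expect this route to prove Conjecture 2 unconditionally only when $b$ grows fast enough in terms of $a$, and to settle the full conjecture (in particular the regime $a\asymp b$) conditionally on GRH, barring a new unconditional input on primes in progressions to nearly square-root moduli.
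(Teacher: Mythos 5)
First, a point of orientation: the statement you have attempted is Conjecture \ref{conjecture2} of this note, which the authors do not prove here --- they record that it was established by Ding, Zhai and Zhao in the separate paper cited as \cite{DingZhaiZhao} (the present note proves Conjecture \ref{conjecture1}, i.e. $\pi_{a,b}>0$). So your attempt must be measured against that work rather than against anything in this paper. Your opening reduction is correct and is indeed the standard one: for a prime $p\le g_{a,b}$ with $p\nmid ab$, one has $p=ax+by$ with $x,y\in\mathbb{Z}_{\ge 0}$ if and only if $p\ge b\,s(p)$, where $s(p)\in\{1,\dots,a-1\}$ is the least positive residue of $p\bar b$ modulo $a$; and your main-term evaluation $\frac{1}{\varphi(a)}\sum_{(s,a)=1}\mathrm{Li}(bs)\sim\frac{1}{2}\mathrm{Li}(g_{a,b})$ is sound, since the reduced residues equidistribute in intervals of $\{1,\dots,a-1\}$ up to an error $O(2^{\omega(a)})$, which is harmless.

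The genuine gap is your final diagnosis that the critical regime $b\asymp a$ (where $a\asymp g_{a,b}^{1/2}$) lies beyond unconditional reach and requires GRH. That conclusion is an artifact of your decomposition into \emph{individual} residue classes: the quantity you need is not an asymptotic for any single progression $p\equiv r\pmod a$, but only the count of $p\le g_{a,b}$ with $s(p)\le p/b$, and for $p$ restricted to a short range this places $s(p)$ in an \emph{interval} of residues, i.e. a union of roughly $p/b$ consecutive classes. Detecting an interval by additive characters reduces the problem to linear exponential sums over primes, $\sum_{p\le x}e(hp\bar b/a)$, i.e. sums $\sum_{n\le x}\Lambda(n)e(\alpha n)$ at rationals $\alpha$ with denominator $q=a/\gcd(a,h)\le a<\sqrt{g_{a,b}}+1$. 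Vinogradov's estimate via Vaughan's identity, $\sum_{n\le x}\Lambda(n)e(\alpha n)\ll\big(xq^{-1/2}+x^{4/5}+x^{1/2}q^{1/2}\big)(\log x)^4$, is nontrivial precisely for $(\log x)^{B}\ll q\ll x(\log x)^{-B}$, and since here $q\le a\ll x^{1/2}$ this covers the whole range once the harmonics with small denominator (and the tiny moduli $a\ll(\log x)^{B}$) are handled by Siegel--Walfisz input --- in particular it covers $b=a+1$, where your criterion becomes the pleasant condition $p\bmod a\le\lfloor p/a\rfloor$ for $p<a^2$. This passage from single classes to unions of classes, with partial summation in the moving threshold $bs$, is exactly the missing idea, and it is in essence how the conjecture was proved unconditionally in \cite{DingZhaiZhao}; Bombieri--Vinogradov and GRH are red herrings here, since no individual-class equidistribution at modulus $\asymp\sqrt{x}$ is ever needed. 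As written, your argument establishes the asymptotic only in the Siegel--Walfisz range $a\ll(\log g_{a,b})^{A}$, and so does not prove the statement.
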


Conjecture \ref{conjecture2} was investigated by Ding \cite{Ding}, who proved an almost all version of it. In a more recent article, Ding, Zhai and Zhao \cite{DingZhaiZhao} succeeded in proving Conjecture \ref{conjecture2}. The result of Ding, Zhai and Zhao was later generalized to the $\ell$-Frobenius number by Ding and Komatsu \cite{DK}. Very recently, Huang and Zhu \cite{HuangZhu} further extended the result of Ding, Zhai and Zhao to the distribution of prime powers.

In this note, we turn to Conjecture \ref{conjecture1} and confirm it here.
\begin{theorem}\label{thm1}
Let $2< a<b$ be two relatively prime integers. Then we have $\pi_{a,b}>0$.
\end{theorem}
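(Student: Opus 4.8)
The plan is to construct a representable prime by two complementary arguments, switching on the size of the smaller coefficient $a$, and using throughout Sylvester's symmetry recalled above: for $0\le s\le g$ exactly one of $s$ and $g-s$ is representable. One trivial sub-case is immediate: if $b$ is prime, then $b=a\cdot 0+b\cdot 1$ is a representable prime, and since $a>2$ and $b>a$ one checks $b\le g=ab-a-b$, so we are done; likewise if $a$ is prime. Thus we may assume $a,b$ composite, although the arguments below do not actually require this. The key structural input is that representable integers are extremely dense at the top of $[0,g]$. Indeed, by Sylvester's symmetry an integer $n\in[g-H,g]$ is non-representable if and only if $g-n\in[0,H]$ is representable, so the number of non-representable integers in $[g-H,g]$ equals the number of representable integers in $[0,H]$. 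The latter is at most the number of lattice points $(x,y)$ with $x,y\ge 0$ and $ax+by\le H$, hence at most $(H/a+1)(H/b+1)\le H^2/(ab)+2H/a+1<H^2/g+2H/a+1$, using $b>a$ and $ab>g$.

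\emph{Case 1: $a$ large, say $a\ge(\log g)^2$.} Here I would take $H=g^{3/4}$ and count primes in the short interval $[g-H,g]$. By a standard result on primes in short intervals (of Hoheisel--Huxley type, applicable since $3/4>7/12$), this interval contains $\gg g^{3/4}/\log g$ primes once $g$ is large. On the other hand, by the density estimate above the number of non-representable integers in $[g-H,g]$ is at most $g^{1/2}+2g^{3/4}/(\log g)^2+1=o\!\left(g^{3/4}/\log g\right)$. Therefore the primes in $[g-H,g]$ cannot all be non-representable, and any representable one among them is the prime we seek.

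\emph{Case 2: $a$ small, say $a\le(\log g)^2$.} Here I would work in a single residue class. Since $\gcd(a,b)=1$, the class $b\pmod a$ is coprime to $a$, and every integer $b+ka=a\cdot k+b\cdot 1$ in it is representable. From $b=(g+a)/(a-1)$ one sees $b$ is at most a constant proportion of $g$, so $[b,g]$ accounts for a positive proportion of $[0,g]$. By the Siegel--Walfisz theorem, subtracting the contribution below $b$, the number of primes $p\equiv b\pmod a$ with $b\le p\le g$ is $\gg \frac{1}{\phi(a)}\cdot\frac{g}{\log g}>0$ for $g$ large, uniformly for $a\le(\log g)^2$. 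Any such prime is a representable prime $\le g$.

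The two cases exhaust all admissible $a$ once $g$ exceeds an absolute threshold, and the finitely many remaining pairs are to be verified directly. The principal obstacle is the balanced regime $a\asymp b\asymp\sqrt g$: there the fully representable window at the top of $[0,g]$ has length only about $\sqrt g=g^{1/2}$, which is shorter than any interval in which primes are presently known to be suitably distributed. The device that rescues this case is precisely to \emph{enlarge} the window to $H=g^{3/4}$ and replace exact representability by a counting inequality, tolerating the $O(g^{1/2})$ non-representable integers that then intrude because they are swamped by the $\gg g^{3/4}/\log g$ primes the short-interval theorem supplies. A secondary subtlety, which I would flag explicitly, is that the threshold in Case 2 rests on the ineffective Siegel--Walfisz theorem, so the collection of small pairs left for direct checking is not explicitly bounded; rendering the argument effective would require an effective zero-free region or a Linnik-type substitute in the range $\log g\ll a\ll(\log g)^2$.
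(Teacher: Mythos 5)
Your skeleton is in fact the same as the paper's: a dichotomy on the size of $a$, with primes $p\equiv b\pmod{a}$ in $(b,g]$ handling small $a$, Sylvester's symmetry plus a prime count in a window at the top of $[0,g]$ handling large $a$, and a finite list of leftover pairs to be checked directly. Your asymptotic reasoning in both cases is sound. But there is a genuine gap, and you have named it yourself without resolving it: Theorem 1 asserts $\pi_{a,b}>0$ for \emph{every} coprime pair $2<a<b$, and both of your analytic inputs are inexplicit, so the ``finitely many remaining pairs'' form a set you cannot identify, let alone verify. In your Case 2 the Siegel--Walfisz theorem in the range $a\le(\log g)^2$ is genuinely ineffective (the constant depends on Siegel's theorem on exceptional zeros; an effective Page-type zero-free region only reaches moduli of size roughly $(\log g)^{2}$ divided by powers of $\log\log g$, so your exponent $2$ is exactly out of reach). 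In your Case 1, Huxley's theorem involves no Siegel zeros and is effective in principle, but no explicit version at the $x^{3/4}$ level exists, and any threshold extractable from its proof would be far beyond computational range. Consequently, what your argument actually proves is ``$\pi_{a,b}>0$ for all sufficiently large $g$, with an inexplicit threshold'' --- which is already the content of inequality (1.1) of Ram\'{\i}rez Alfons\'{\i}n and Ska{\l}ba; the entire point of the theorem is to remove that ineffectivity. Flagging the obstruction in your final paragraph does not close it.

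The paper's proof is engineered precisely to avoid both obstructions, and the comparison is instructive. For small $a$ it replaces Siegel--Walfisz by the explicit bounds of Bennett, Martin, O'Bryant and Rechnitzer (Lemmas 1, 3 and 4 of the paper); their admissibility threshold $x\ge\exp\big(0.03\sqrt{q}(\log q)^3\big)$ is met exactly when $a\lesssim\log b$, which is why the paper's cut is at $a\asymp\log b$ rather than at $(\log g)^2$. For large $a$ it takes the window length $c=g/\log g$ instead of your $H=g^{3/4}$: then $\pi(g)-\pi(g-c)$ is of order $g/(\log g)^2$ and can be bounded below by Panaitopol's \emph{explicit} form of the prime number theorem (Lemma 2), so no short-interval technology is needed at all; the Sylvester/lattice count is about $\frac{g}{2(\log g)^2}+\frac{g}{a\log g}$, which is dominated once $a\ge 155\log b$, and the leftover intermediate range $4.2\log b<a<155\log b$ with $a>10^4$ forces $b>\exp(10^4/155)$, where the thinner margin still closes. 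Every constant is numeric, so the residual cases are an explicitly bounded finite set, which the authors then dispatch by computer (plus extra explicit lemmas to shrink the computation). If you want to repair your write-up along your own lines, that is the fix: lengthen the window so that an explicit form of the ordinary prime number theorem suffices, and replace Siegel--Walfisz by explicit prime counts in progressions, accepting the narrower (but explicit) range of $a$ that these tools permit.
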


\section{Proof of Theorem \ref{thm1}}
For simplicity, we use $g$ to denote $g_{a,b}$. We will follow the proof of Ram\'{\i}rez Alfons\'{\i}n and Ska{\l}ba \cite[Theorem 1]{RS} with explicit calculations.
Since the constant $C(\varepsilon)$ in Eq. (\ref{eq1-1}) is not specific in their argument, we need an effective version of the Siegel--Walfisz theorem as well as an effective version of the prime number theorem.

Let $\pi(x;q,a)$ be the number of primes $p$ not exceeding $x$ so that $p\equiv a\pmod{q}$ and
$$
{\rm Li}(x)=\int_2^{x}\frac{1}{\log t}\mathrm{dt}.
$$
\begin{lemma}\cite[Theorem 1.3]{Bennett}.\label{lem1}
Let $q\ge 3$ be an integer and let $m$ be an integer that is coprime to $q$. There exist explicit constants $c_\pi(q)$ and $x_\pi(q)$ such that
$$
\left|\pi(x;q,m)-\frac{{\rm Li}(x)}{\varphi(q)}\right|<c_\pi(q)\frac{x}{(\log x)^2} \quad \text{for~all}~x\ge x_\pi(q).
$$
Moreover, $c_\pi(q)\le c_0(q)$ and $x_\pi(q)\le x_0(q)$, where
\begin{align*}
c_0(q)=
\begin{cases}
\frac{1}{840}, & \text{if~}3\le q\le 10^4,\\
\frac{1}{160}, & \text{if~}q>10^4,
\end{cases}
\end{align*}
and
\begin{align}\label{remark-1}
x_0(q)=
\begin{cases}
8\cdot 10^9, & \text{if~}3\le q\le 10^{4},\\
\exp\Big(0.03\sqrt{q}(\log q)^3\Big), & \text{if~}q>10^{4}.
\end{cases}
\end{align}
\end{lemma}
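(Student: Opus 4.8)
The plan is to derive this effective estimate along the classical analytic route through Dirichlet $L$-functions, keeping every constant explicit. First I would pass from $\pi(x;q,m)$ to the Chebyshev-type function $\psi(x;q,m)=\sum_{n\le x,\,n\equiv m\,(q)}\Lambda(n)$, since the two are linked by Abel summation together with explicit bounds for the prime-power contribution $\psi-\theta$. Under this transfer the natural main term $x/\varphi(q)$ for $\psi$ becomes exactly $\mathrm{Li}(x)/\varphi(q)$ for $\pi$, which is why the statement is phrased against $\mathrm{Li}$ rather than $x/\log x$.

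The core identity is the orthogonality decomposition
\[
\psi(x;q,m)=\frac{1}{\varphi(q)}\sum_{\chi\bmod q}\overline{\chi}(m)\,\psi(x,\chi),\qquad \psi(x,\chi)=\sum_{n\le x}\chi(n)\Lambda(n),
\]
combined with the truncated explicit formula that writes each $\psi(x,\chi)$ as a main term (present only for the principal character) minus a sum $\sum_{\rho}x^{\rho}/\rho$ over the nontrivial zeros $\rho=\beta+i\gamma$ of $L(s,\chi)$ up to a height $T$, plus explicit remainder terms. The problem thus reduces to bounding $\sum_\rho x^\rho/\rho$ with fully explicit constants.

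Two quantitative inputs about zeros drive the estimate. First, an explicit de la Vall\'ee Poussin-type zero-free region of the shape $\beta<1-c/\log\!\big(q(|\gamma|+2)\big)$ with an admissible numerical $c$, which pushes all relevant zeros to the left of $\Re s=1$ and turns $\sum_\rho x^\rho/\rho$ into a quantity of size $x\exp(-c'\sqrt{\log x})$ after optimizing $T$ against $x$; and second, explicit zero-counting bounds for $L(s,\chi)$ up to height $T$, controlling how many terms appear in the sum. Feeding these in and optimizing produces an estimate $|\psi(x;q,m)-x/\varphi(q)|\ll x/(\log x)^2$ valid for $x$ beyond an explicit threshold, after which Abel summation delivers the stated bound for $\pi(x;q,m)$.

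The main obstacle, as in every effective result of this kind, is the possible exceptional (Siegel) zero $\beta_1$ of a real character, for which Siegel's theorem supplies only ineffective control. This is exactly what forces the two-branch shape of $c_0(q)$ and $x_0(q)$. For $3\le q\le10^4$ one can invoke numerical verification that the relevant $L(s,\chi)$ have no zeros in the pertinent region up to a large height, eliminating the exceptional zero and permitting the sharper constant $c_0=1/840$ with the fixed threshold $x_0=8\cdot10^9$; for $q>10^4$ one must fall back on a theoretical explicit lower bound for $1-\beta_1$ of Pintz--Goldfeld type, which yields the weaker constant $c_0=1/160$ and the growing threshold $x_0=\exp\!\big(0.03\sqrt{q}(\log q)^3\big)$. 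I expect the decisive effort to lie entirely in this explicit constant-chasing---through the zero-free region, the zero count, and above all the exceptional-zero analysis---rather than in any new conceptual step.
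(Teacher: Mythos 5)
Your sketch reconstructs, at the level of strategy, the actual proof of this result in \cite{Bennett}: the passage from $\pi$ to $\psi$, the orthogonality decomposition over characters, the truncated explicit formula, explicit zero-free regions and zero counts, rigorous numerical verification of GRH up to a finite height for small moduli, and explicit exceptional-zero bounds for large moduli. That is a faithful roadmap of the source. But you should be aware that the paper itself does not prove this lemma at all: it is imported wholesale from [Bennett--Martin--O'Bryant--Rechnitzer, Theorem 1.3], and no author of a short note would (or could) re-derive it, since all of the substance lives in the explicit constant bookkeeping and in large-scale computations (the verified zero data for every modulus in the small range) that occupy a long paper of their own. As a ``proof proposal,'' your text is therefore a correct description of someone else's proof rather than an argument one could execute here; the appropriate move, and the one the paper makes, is citation.

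There is also one concrete point your sketch gets wrong, and it happens to be the only step for which the paper supplies an actual argument. In the original theorem of \cite{Bennett} the branch point is $q\le 10^5$, not $10^4$: the sharper constants $c_0=1/840$, $x_0=8\cdot 10^9$ are proved for all $3\le q\le 10^5$. The lemma as stated in this paper moves the cutoff down to $10^4$, which is a deliberate \emph{weakening}: for $10^4<q\le 10^5$ it replaces the pair $\big(1/840,\ 8\cdot 10^9\big)$ by the pair $\big(1/160,\ \exp\big(0.03\sqrt{q}(\log q)^3\big)\big)$, and this is legitimate precisely because $1/160>1/840$ and $\exp\big(0.03\sqrt{q}(\log q)^3\big)>8\cdot 10^9$ throughout that range (at $q=10^4$ the exponent is already about $2.3\times 10^3$). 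So your explanation of the two-branch shape --- verified GRH up to $10^4$, exceptional-zero analysis beyond --- misattributes the cutoff: the dichotomy in the source sits at $10^5$, and the $10^4$ version here is obtained by the trivial comparison just described, which is exactly the content of the Remark following the lemma and the one verification your proposal omits.
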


\begin{remark}
In the original statement of \cite[Theorem 1.3]{Bennett}, the result involving (\ref{remark-1}) is 
\begin{align}\label{remark-2}
x_0(q)=
\begin{cases}
8\cdot 10^9, & \text{if~}3\le q\le 10^{5},\\
\exp\Big(0.03\sqrt{q}(\log q)^3\Big), & \text{if~}q>10^{5}.
\end{cases}
\end{align}
Actually, (\ref{remark-1}) follows from (\ref{remark-2}) by noting that
$$
\exp\Big(0.03\sqrt{q}(\log q)^3\Big)>8\cdot 10^9
$$
for any $10^{4}\le q\le 10^5$.
\end{remark}

\begin{lemma}\cite[Theorem 1]{Panaitopol}.\label{lem3}
For $x\ge 59$ we have
$$
\frac{x}{\log x-1+(\log x)^{-0.5}}<\pi(x)<\frac{x}{\log x-1-(\log x)^{-0.5}}.
$$
\end{lemma}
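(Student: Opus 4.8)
\emph{Proof proposal.} The plan is to recast the claimed double inequality as a single two-sided estimate for the ratio $x/\pi(x)$ and then to compare that ratio against the asymptotic expansion of ${\rm Li}(x)$. Taking reciprocals, which is legitimate because $\log x-1-(\log x)^{-0.5}>0$ for $x\ge 59$, the two inequalities are together equivalent to
$$
\left|\frac{x}{\pi(x)}-(\log x-1)\right|<(\log x)^{-0.5}.
$$
The guiding observation is that, by repeated integration by parts in ${\rm Li}(x)=\int_2^x(\log t)^{-1}\,\mathrm{d}t$, one has ${\rm Li}(x)=\frac{x}{\log x}\big(1+\tfrac{1}{\log x}+\tfrac{2}{(\log x)^2}+\cdots\big)$, and inverting this series gives
$$
\frac{x}{{\rm Li}(x)}=\log x-1-\frac{1}{\log x}+O\!\left(\frac{1}{(\log x)^2}\right).
$$
The crucial point is that the leading correction $-1/\log x$ to $\log x-1$ has size $(\log x)^{-1}$, which is strictly smaller than the permitted tolerance $(\log x)^{-0.5}$ as soon as $x>e$. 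This is precisely what renders the relatively weak exponent $-0.5$ attainable, and it leaves genuine slack on both sides of the target inequality.

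For the analytic core I would insert an explicit form of the prime number theorem. To keep the constants clean I would start from explicit bounds $|\theta(x)-x|\le\eta(x)\,x$ on the Chebyshev function $\theta(x)=\sum_{p\le x}\log p$, of Rosser--Schoenfeld or Dusart type, and pass to $\pi(x)$ by Abel summation,
$$
\pi(x)=\frac{\theta(x)}{\log x}+\int_2^{x}\frac{\theta(t)}{t(\log t)^2}\,\mathrm{d}t,
$$
whose main term reassembles, after one more integration by parts, into ${\rm Li}(x)$ up to a bounded constant; alternatively one may quote an explicit de la Vall\'ee Poussin bound $\pi(x)={\rm Li}(x)+O\big(x\exp(-c\sqrt{\log x})\big)$ directly. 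Writing $\pi(x)={\rm Li}(x)+E(x)$, a short computation yields
$$
\frac{x}{\pi(x)}=\frac{x}{{\rm Li}(x)}+O\!\left(\frac{|E(x)|(\log x)^2}{x}\right),
$$
so it suffices to have $|E(x)|=o\big(x(\log x)^{-5/2}\big)$; every cited effective estimate clears this threshold comfortably. Combining with the expansion above produces the required two-sided bound for all $x\ge X_0$, where $X_0$ is the explicit point past which the error estimate is valid.

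It remains to settle the bounded range $59\le x<X_0$ by direct verification. Here $\pi$ is a step function that jumps only at primes, whereas the two envelopes $x/(\log x-1\pm(\log x)^{-0.5})$ are smooth; I would first confirm, by a one-line derivative computation, that both envelopes are increasing on $[59,X_0]$, so that the inequalities need only be tested at the prime arguments together with the limiting values immediately preceding each jump. A finite tabulation of $\pi(x)$ then disposes of this range.

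The main obstacle is not the asymptotic analysis, since the tolerance $(\log x)^{-0.5}$ is generous, but rather keeping the crossover threshold $X_0$ small enough that the finite verification remains computationally feasible; this forces the use of the sharpest available explicit constants in the error term of the prime number theorem, which is the technically heaviest ingredient. A secondary subtlety, easy to overlook, is the mismatch between the step function $\pi(x)$ and the smooth envelopes: one must verify monotonicity of the envelopes and compare at the jump points, rather than at a single representative value of $x$, since a smooth envelope may fail just before a prime even when it holds just after.
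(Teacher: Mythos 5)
First, a point of calibration: the paper does not prove this lemma at all --- it is imported verbatim from Panaitopol \cite{Panaitopol}, so there is no internal argument to compare against. Your sketch is, however, a faithful reconstruction of how the result is actually proved in that source. Panaitopol works from explicit Rosser--Schoenfeld/Dusart-type estimates of the shape
$$
\frac{x}{\log x}\left(1+\frac{1}{\log x}+\frac{c_-}{(\log x)^2}\right)<\pi(x)<\frac{x}{\log x}\left(1+\frac{1}{\log x}+\frac{c_+}{(\log x)^2}\right),
$$
valid past moderate explicit thresholds; upon inversion this is exactly your expansion $x/\pi(x)=\log x-1-\tfrac{1}{\log x}+O\big((\log x)^{-2}\big)$, and comparison with the two envelopes leaves a finite range settled by direct computation at the primes, just as you describe. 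Your reduction to $\left|x/\pi(x)-(\log x-1)\right|<(\log x)^{-0.5}$ is legitimate (both denominators are positive for $x\ge 59$), the series inversion is right, and your jump-point bookkeeping is correct: the numerator of the envelope derivative is $\log x-2\pm\big((\log x)^{-1/2}+\tfrac12(\log x)^{-3/2}\big)$, positive for $x\ge 59$, so it suffices to test $k\ge L(p_{k+1})$ and $k<U(p_k)$ at consecutive primes.

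The one genuine soft spot is the fallback ``alternatively one may quote an explicit de la Vall\'ee Poussin bound directly,'' together with the claim that ``every cited effective estimate clears this threshold comfortably.'' Asymptotically $x\exp(-c\sqrt{\log x})$ does beat $x(\log x)^{-5/2}$, but with realistic explicit constants (e.g., an error term of the form $0.2795\,x(\log x)^{-3/4}\exp(-\sqrt{\log x/6.455})$) the needed inequality reduces to $(\log x)^{9/4}\exp(-\sqrt{\log x/6.455})$ being small, which only takes hold once $\log x$ is of order $10^{3}$ --- that is, $X_0\approx e^{2000}$, hopelessly beyond any finite tabulation. So the $\theta$-based power-of-$\log$ bounds (which encode large-scale verification of the Riemann Hypothesis to bounded height, not just a zero-free region) are not merely ``the sharpest available'' as a convenience, as your closing paragraph suggests; they are the only branch of your plan that yields a feasible crossover $X_0$, of order $10^5$--$10^6$. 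If you commit to that branch and carry the explicit constants through the Abel-summation step, your outline closes and coincides in substance with the proof in \cite{Panaitopol}.
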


\begin{lemma}\cite[Corollary 1.7]{Bennett}.\label{lem2}
Let $m$ and $q$ be integers with $1\le q\le 10^5$ and $\gcd(m,q)=1$. If $x\ge 10^6$, then
$$
\left|\pi(x;q,m)-\frac{{\rm Li}(x)}{\varphi(q)}\right|<0.027\frac{x}{(\log x)^2}.
$$
\end{lemma}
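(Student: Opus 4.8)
The plan is to reduce the claim to an explicit estimate for the Chebyshev function $\psi(x;q,m)=\sum_{\substack{n\le x\\ n\equiv m\,(q)}}\Lambda(n)$ and then transfer it to $\pi(x;q,m)$ by partial summation. First I would invoke the orthogonality of Dirichlet characters to write
$$
\psi(x;q,m)=\frac{1}{\varphi(q)}\sum_{\chi\bmod q}\bar\chi(m)\,\psi(x,\chi),\qquad \psi(x,\chi)=\sum_{n\le x}\chi(n)\Lambda(n).
$$
The principal character $\chi_0$ supplies the main term $\psi(x,\chi_0)=x+O_{q}(\log x)$ once prime powers and the finitely many primes dividing $q$ are removed, so the whole task is to bound $\psi(x,\chi)$ for the non-principal $\chi$ as uniformly as possible over $q\le 10^5$.

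For each non-principal $\chi$ I would use the truncated explicit formula
$$
\psi(x,\chi)=-\sum_{|\gamma|\le T}\frac{x^{\rho}}{\rho}+O\!\left(\frac{x(\log qx)^2}{T}\right),
$$
the sum running over the non-trivial zeros $\rho=\beta+i\gamma$ of $L(s,\chi)$. Their contribution is controlled by three ingredients: (i) an explicit zero-free region of Landau--Page--Kadiri type, giving $\beta\le 1-c/\log\!\big(q(|\gamma|+2)\big)$ for an explicit $c$; (ii) a rigorous numerical verification of the Generalized Riemann Hypothesis up to a height $H(q)$ for every character modulo $q\le 10^5$, so that each zero with $|\gamma|\le H(q)$ lies on $\beta=\tfrac12$ and hence contributes only $O(x^{1/2})$; and (iii) an explicit zero-density estimate to dominate the zeros above height $H(q)$. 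The exceptional (Siegel) real zero of a real character is treated separately, either ruled out in the range $q\le 10^5$ by the same GRH verification or bounded through an explicit lower bound $\beta\le 1-c'/\sqrt q$.

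With $\psi(x;q,m)=x/\varphi(q)+R(x;q,m)$ in hand, where $R$ collects the zero contributions together with the truncation and prime-power errors, I would pass first to $\theta(x;q,m)$ (the prime-power terms cost only $O(\sqrt x\log x)$) and then to $\pi(x;q,m)$ via
$$
\pi(x;q,m)=\frac{\theta(x;q,m)}{\log x}+\int_2^x\frac{\theta(t;q,m)}{t(\log t)^2}\,dt,
$$
whereupon the main term ${\rm Li}(x)/\varphi(q)$ emerges from integrating $t/\log t$ and the stated error $0.027\,x/(\log x)^2$ absorbs everything else. The \textbf{main obstacle} is extracting a constant as small as $0.027$ \emph{uniformly} over all $q\le 10^5$: this forces the verification in (ii) to be carried out to a large height $H(q)$ for every such modulus and demands scrupulous bookkeeping of each explicit constant entering the zero-free region, the density estimate, and the partial summation, very likely supplemented by a direct numerical check of the inequality on an intermediate range of $x$ just above $10^6$, where the asymptotic bounds are weakest.
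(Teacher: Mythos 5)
This lemma is not proved in the paper at all: it is imported verbatim from Bennett--Martin--O'Bryant--Rechnitzer \cite[Corollary 1.7]{Bennett}, so the only ``proof'' the paper offers is the citation. Your outline does reproduce, at the level of strategy, how that reference actually establishes such bounds: character orthogonality, the truncated explicit formula for $\psi(x,\chi)$, explicit zero-free regions of Kadiri type, rigorous numerical verification of GRH to a finite height for every modulus $q\le 10^5$ (Platt's computations), explicit zero-density estimates above that height, passage from $\psi$ to $\theta$ to $\pi(x;q,m)$ by partial summation, and direct computation on the range of $x$ just above $10^6$ where the analytic bounds are weakest. But recognize that what you have written is a roadmap, not a proof: the specific constant $0.027$, uniform over all $q\le 10^5$ and valid already at $x=10^6$, is irreducibly computational, and no step of your sketch can be certified without the verified zero data and finite-range numerical checks that occupy the bulk of the cited paper. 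Within this note the correct move is exactly what the authors do --- quote the explicit result --- rather than attempt to rederive it; your sketch is a fair account of the provenance of the lemma, nothing more.
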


\begin{lemma}\cite[Theorem 1.9]{Bennett}.\label{newlemma}
Let $q$ and $m$ be integers with $100\le q\le 10^4$ and $\gcd(m,q)=1$. Then for $x\le 10^{11}$ we have
$$
\max_{1\le y\le x}\left|\pi(y;q,m)-\frac{{\rm Li}(y)}{\varphi(q)}\right|<2.734\frac{\sqrt{x}}{\log x}.
$$
\end{lemma}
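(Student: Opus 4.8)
The inequality to be established is an explicit, uniform form of the prime number theorem for arithmetic progressions, restricted to the bounded region $x\le 10^{11}$, $100\le q\le 10^4$. The decisive feature is the smallness of the constant: a purely analytic bound through the explicit formula --- estimating $\frac{1}{\varphi(q)}\sum_{\chi}\overline\chi(m)\sum_{\rho}y^{\rho}/\rho$ by the triangle inequality over the characters --- yields, even under the generalized Riemann hypothesis, an effective constant of order $\log^2(qy)/\log y$, which for $y$ near $10^{11}$ and $q$ near $10^4$ overshoots $2.734$ by more than two orders of magnitude. The bound therefore reflects the \emph{actual}, far smaller oscillation of the error term rather than its worst case, and my plan is to establish it by a certified computation of $\pi(y;q,m)$ across the whole region, not by worst-case estimation.

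First I would reduce the maximum over $y$ to the prime jumps. Fix an admissible pair $(q,m)$ and set $D(y)=\pi(y;q,m)-{\rm Li}(y)/\varphi(q)$. As $y$ increases, $D$ decreases continuously, since ${\rm Li}$ is increasing, and jumps upward by $1$ at each prime $p\equiv m\pmod q$. Hence $\max_{1\le y\le x}|D(y)|$ is attained either immediately after such a jump (a candidate maximum of $D$) or immediately before a jump or at $y=x$ (a candidate minimum). It thus suffices, for each $(q,m)$, to know the exact value of $\pi(p;q,m)$ at every prime $p\le x$ together with ${\rm Li}$ evaluated there, and to track the running extremes of $D$; moreover the right-hand side may be replaced by its value at the endpoint $x$, because $\sqrt{y}/\log y$ is increasing.

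The required counts I would obtain in two regimes. For $y$ up to a sieving threshold I would sieve the primes below $x$ and distribute them among the residue classes, giving exact values of $\pi(y;q,m)$. For larger $y$ I would evaluate $\pi(y;q,m)$ through a smoothed explicit formula
\[
\psi(y;q,m)=\frac{y}{\varphi(q)}-\frac{1}{\varphi(q)}\sum_{\chi\bmod q}\overline\chi(m)\sum_{\rho}\frac{y^{\rho}}{\rho}+O(\log qy),
\]
in which the critical zeros $\rho=\tfrac12+i\gamma$ with $|\gamma|\le T_0$ are taken from a rigorous numerical verification of the generalized Riemann hypothesis for every character of conductor up to $10^4$ (available, for instance, through Platt's computations), the tail $|\gamma|>T_0$ being controlled by $O\!\left(y\log^2(qy)/T_0\right)$ via $N(T,\chi)=\tfrac{T}{\pi}\log\tfrac{qT}{2\pi e}+O(\log qT)$. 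Evaluating this finite sum in interval arithmetic --- rather than bounding it by the triangle inequality --- gives $\psi(y;q,m)$, and hence, through the partial-summation passage $\psi\to\theta\to\pi$ (which divides the error by $\log y$ and so produces the $\sqrt{y}/\log y$ shape), the value of $\pi(y;q,m)$ within a certified margin lying well below $2.734\sqrt{y}/\log y$.

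Finally I would run this evaluation over every modulus $100\le q\le 10^4$ and every class $m$ coprime to $q$, record the maximal $|D(y)|$ over $y\le x$, and confirm it stays below $2.734\sqrt{x}/\log x$, the constant being the certified supremum of the observed data. The main obstacle is the scale and the rigour of the computation rather than any single analytic step: one must secure the verification of the generalized Riemann hypothesis to a height $T_0$ adequate for all conductors up to $10^4$ at once, control every truncation and smoothing error in interval arithmetic, and organize the pass over all $\sim 10^4$ progressions and all primes up to $10^{11}$ so that the running extremes are certified simultaneously. It is this uniform, certified control over the full two-parameter family $(q,m)$ and over all $y\le x$ --- not any individual estimate --- that constitutes the hard part.
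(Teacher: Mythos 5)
You should first note what the paper actually does with this statement: nothing is proved here at all. The lemma is imported verbatim, with citation, from Bennett, Martin, O'Bryant and Rechnitzer \cite{Bennett} (their Theorem 1.9), where it is one of the \emph{computational} theorems of that paper, established by an exhaustive certified sieve: all primes up to $10^{11}$ are enumerated, binned into the residue classes of every modulus $100\le q\le 10^4$, and the running extremes of $D(y)=\pi(y;q,m)-{\rm Li}(y)/\varphi(q)$ are recorded at the prime jumps. Your first branch --- the reduction of $\max_{1\le y\le x}|D(y)|$ to the jump points (using that $D$ decreases continuously between primes $p\equiv m \pmod q$ and jumps up by $1$ at them), together with an exact sieve count --- is essentially that argument, and your opening diagnosis is exactly right: no worst-case analytic estimate, even GRH-conditional, can produce a constant as small as $2.734$, so the content of the lemma is an observed, certified supremum rather than an estimate.

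The genuine gap is in your second branch, the proposal to evaluate $\pi(y;q,m)$ for ``larger $y$'' through a truncated explicit formula with rigorously verified zeros. Run the numbers: near $y=10^{11}$ the admissible margin is about $2.734\sqrt{y}/\log y\approx 3\cdot 10^4$, so your tail bound $O\bigl(y\log^2(qy)/T_0\bigr)$ forces $T_0\gtrsim \sqrt{y}\,\log^3(qy)$, i.e.\ $T_0$ of order $10^9$--$10^{10}$, for \emph{every} character of \emph{every} conductor up to $10^4$. No rigorous verification of GRH comes close to this for such conductors; Platt's computation (which \cite{Bennett} does use, but only for its medium-range theorems such as the one quoted here as Lemma \ref{lem1}, where the target is the much weaker $c_\pi(q)\,x/(\log x)^2$) reaches height roughly $10^8/q$, which is about $10^4$ when $q\approx 10^4$ --- five or six orders of magnitude short. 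So the analytic evaluation cannot certify the constant $2.734$ anywhere in the stated range, and the hybrid scheme collapses to its first branch: one must sieve the whole way to $10^{11}$, which is heavy (there are $\pi(10^{11})\approx 4.1\cdot 10^9$ primes, tallied across all $\sim 10^4$ moduli) but feasible, and is what the cited source did. A smaller loose end in your write-up: the passage $\psi\to\theta\to\pi$ by partial summation, and the control of $D(y)$ between analytic sample points, would each need certified supplementary terms, but these are moot once the zero-height obstruction kills that branch.
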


Before the proof of our theorem, we first note that
$$
(a-1)a\le (a-1)(b-1)=g+1 \quad \text{and} \quad b^2>(a-1)(b-1)>g
$$
which clearly means that $b>\sqrt{g}$ and further $a<\sqrt{g}+1$.

We now proceed to the proof of Theorem \ref{thm1}.

\begin{proof}[Proof of Theorem \ref{thm1}]
The proof will be separated into several cases.

{\bf Case I.} $10^4<a\le 4.2\log b$. In this case, we have
\begin{align}\label{eq2-1}
g>b\ge \exp\left(\frac{a}{4.2}\right)>\exp\Big(0.03\sqrt{a}(\log a)^3\Big).
\end{align}
We are going to count the primes $p\le g$ with $p\equiv b\pmod{a}$.
We note that if a prime $p$ satisfies the congruence $p\equiv b\pmod{a}$ with $p>b$, then
$$
p=b+\frac{p-b}{a}a
$$
is a prime satisfying our requirements. Thus, by (\ref{eq2-1}) and Lemma \ref{lem1} we get
\begin{align}\label{eq2-2}
\pi_{a,b}\ge \pi(g;a,b)-\pi(b;a,b)\ge \frac{{\rm Li}(g)}{\varphi(a)}-\frac{{\rm Li}(b)}{\varphi(a)}+2R(g),
\end{align}
where $|R(g)|<\frac{1}{160}g/(\log g)^2$.  Moreover, from (\ref{eq2-2}) we have
\begin{align*}
\pi_{a,b}&\ge \frac{{\rm Li}(g)}{\varphi(a)}-\frac{{\rm Li}(b)}{\varphi(a)}-\frac{g}{80(\log g)^2}\\
&>\frac{1}{\varphi(a)\log g}\int_b^{g}dt-\frac{g}{80(\log g)^2}\\
&\ge \frac{g-b}{4.2(\log g)^2}-\frac{g}{80(\log g)^2}>0
\end{align*}
since $10^4<a\le 4.2(\log b)<4.2\log g$, $80(g-b)-4.2g>0$ where $b=(g+a)/(a-1)$.

{\bf Case II.} $a>10^4$ and $155(\log b)\le a<\sqrt{g}+1$. Let $c=g/\log g$. For any $0\le y\le \left\lfloor c/b \right\rfloor$, the number of $x\in \mathbb{Z}_{\ge 0}$ so that
$
ax+by\le c
$
is exactly
$$
\left\lfloor\frac{c-by}{a}\right\rfloor+1.
$$
Recall that for $ax_1+by_1, ax_2+by_2< g$ with $ax_1+by_1=ax_2+by_2$ we must have $x_1=x_2$ and $y_1=y_2$ since $(a,b)=1, x_1,x_2<b$, and $y_1,y_2<a$.
Thus, the number of integers $n\in [0,c]$ with the form
$
ax+by~ (x,y\in \mathbb{Z}_{\ge 0})
$
is
\begin{align}\label{eq2-3}
\sum_{y=0}^{\left\lfloor c/b \right\rfloor}\left(\left\lfloor\frac{c-by}{a}\right\rfloor+1\right).
\end{align}
Now, by the result of Sylvester (see e.g. \cite{RS}) mentioned in the introduction, we know that for any $0\le s\le g$ exactly one of $s$ and $g-s$ could be expressed as the form
$
ax+by~ (x,y\in \mathbb{Z}_{\ge 0}).
$
Thus, the total number of integers in the interval $[g-c,g]$ without the form $
ax+by~ (x,y\in \mathbb{Z}_{\ge 0})
$
equals (\ref{eq2-3}). 
Hence, we have
\begin{align}\label{eq2-4}
\pi_{a,b}\ge \pi(g)-\pi(g-c)-\sum_{y=0}^{\left\lfloor c/b \right\rfloor}\left(\left\lfloor\frac{c-by}{a}\right\rfloor+1\right),
\end{align}
since the right-hand side of (\ref{eq2-4}) is exactly the number of primes of the form $ax+by$ in $(g-c,g]$. It is clear that
\begin{align}\label{eq2-5}
\sum_{y=0}^{\left\lfloor c/b \right\rfloor}\left(\left\lfloor\frac{c-by}{a}\right\rfloor+1\right)
&\le \left\lfloor \frac{c}{a}\right\rfloor+1+\sum_{y=1}^{\left\lfloor c/b \right\rfloor}\left(\frac{c-by}{a}+1\right)\nonumber\\
&\le \left\lfloor \frac{c}{a}\right\rfloor+1+\frac{1}{2}\left(\frac{c-b}{a}+1+\frac{b}{a}+1\right)\left\lfloor \frac{c}{b} \right\rfloor\nonumber\\
&\le \frac{c}{a}+1+\left(\frac{c}{2a}+1\right)\frac{c}{b}\nonumber\\
&=\frac{c^2}{2ab}+\frac{c}{a}+\frac{c}{b}+1.
\end{align}
By (\ref{eq2-4}), (\ref{eq2-5}) and Lemma \ref{lem3}, we have
\begin{align}\label{eq2-6}
\pi_{a,b}\ge\frac{g}{\log g-1+(\log g)^{-0.5}}~-~&\frac{g-g/\log g}{\log (g-g/\log g)-1-\log^{-0.5} (g-g/\log g)}\nonumber\\
&-\frac{g}{2(\log g)^2}-\frac{g}{155(\log b)(\log g)}-\frac{\sqrt{g}}{\log g}-1
>0
\end{align}
providing that $g>5\cdot 10^8$ via computer verification.\footnote{We use the software  to do this. For the verifications, we firstly use the inequality $b>\sqrt{g}$. And then the middle equation of (\ref{eq2-6}) turns to be a function of $g$ which we will denote it by $f(g)$, say. By computer, we know $f(g)\ge f(5\cdot 10^8)>0$ for $g\in \big[5\cdot10^8,10^{18}\big]$. Actually, the minimum value is 566.0054846 (obtained by taking $g=5\cdot 10^8$). For $g>10^{18}$, we use the Taylor expansion to get
$$
\frac{g}{\log g-1+(\log g)^{-0.5}}>\frac{g}{\log g}+\frac{g}{(\log g)^2}-\frac{g}{(\log g)^{2.5}}
$$
and use the Taylor expansion as well as the inequality $t/(1+t)\le \log (1+t)\le t,~(t>-1)$ to get
$$
\frac{g-g/\log g}{\log (g-g/\log g)-1-\log^{-0.5} (g-g/\log g)}\!<\!\frac{g}{\log g}\big(1-\frac{1}{\log g}\big)\big(1+\frac{5}{4\log g}\big)\!=\!\frac{g}{\log g}+\frac{g}{4(\log g)^2}-\frac{5}{4(\log g)^3}.
$$
Then for $g>10^{18}$ we have 
$$
f(g)>\frac{147g}{620(\log g)^2}-\frac{g}{(\log g)^{2.5}}+\frac{5}{4(\log g)^3}-\frac{\sqrt{g}}{\log g}-1>0.
$$} 

{\bf Case III.} $a>10^4$ and $4.2(\log b)<a<155(\log b)$. In this case, it follows clearly that
$g>b>\exp\left(10^4/155\right)$.
By the same arguments (with $155$ replaced by $4.2$) leading to (\ref{eq2-6}) in {\bf Case II}, we have for $g>b>\exp\left(10^4/155\right)$
\begin{align*}
\pi_{a,b}\ge\frac{g}{\log g-1+(\log g)^{-0.5}}~-~&\frac{g-g/\log g}{\log (g-g/\log g)-1-\log^{-0.5} (g-g/\log g)}\nonumber\\
&-\frac{g}{2(\log g)^2}-\frac{g}{4.2(\log b)(\log g)}-\frac{\sqrt{g}}{\log g}-1>0
\end{align*}
with the help of computers.\footnote{The verifications are similar to {\bf Case II} (i.e., footnote 1 in page 4).}

Thus, for $a>10^4$, it remains to verify the case of $g\le 5\cdot 10^8$ with the constraint $a\ge 155(\log b)$ from {\bf Cases I, II} and {\bf III}.

We next treat the cases of $3\le a\le 10^4$. For a prime $a$ (resp. $b$), it is clear that
$$
a=a\cdot1+b\cdot0 \quad (\text{resp.~} b=a\cdot0+b\cdot1)
$$
is a prime satisfying our requirement. Thus, we will only consider the case that both $a$ and $b$ are not primes (hence $a\ge 4$).

{\bf Case IV.} $4\le a\le 10^4$, $a\le 210\log b<210\log g$ and $b>8\cdot 10^9$.  Then
$$
g=ab-a-b=(a-1)(b-1)-1\ge 3 \cdot8\cdot 10^9-1>8\cdot 10^9
$$ and $b=(g+a)/(a-1)\le g/2$ for $a\ge 4$. Then by Lemma \ref{lem1} we have
\begin{align*}
\pi_{a,b}&\ge \pi(g;a,b)-\pi(b;a,b)\ge \frac{{\rm Li}(g)}{\varphi(a)}-\frac{{\rm Li}(b)}{\varphi(a)}+2R(g),
\end{align*}
where $|R(g)|<\frac{1}{840}g/(\log g)^2$. Thus,
\begin{align*}
\pi_{a,b}&\ge \frac{{\rm Li}(g)}{\varphi(a)}-\frac{{\rm Li}(b)}{\varphi(a)}-\frac{g}{420(\log g)^2}\\
&>\frac{1}{\varphi(a)\log g}\int_b^{g}dt-\frac{g}{420(\log g)^2}\\
&\ge \frac{g-b}{\varphi(a)(\log g)}-\frac{g}{420(\log g)^2}\\
&> \frac{g}{420(\log g)^2}-\frac{g}{420(\log g)^2}=0
\end{align*}
since $a\le 210\log b<210\log g$ and $b\le g/2$.

{\bf Case V.} $4\le a\le 10^4$, $210\log b<a<\sqrt{g}+1$ and $b>8\cdot 10^9$. Then $g\ge2b>16\cdot 10^9$.
By the same arguments (with $155$ replaced by $210$) leading to (\ref{eq2-6}) in {\bf Case II}, we have
\begin{align*}
\pi_{a,b}\ge\frac{g}{\log g-1+(\log g)^{-0.5}}~-~&\frac{g-g/\log g}{\log (g-g/\log g)-1-\log^{-0.5} (g-g/\log g)}\nonumber\\
&-\frac{g}{2(\log g)^2}-\frac{g}{210(\log b)(\log g)}-\frac{\sqrt{g}}{\log g}-1>0
\end{align*}
with the help of computers.\footnote{The verifications are similar to {\bf Case II} (i.e., footnote 1 in page 4).}

From {\bf Cases IV and V}, for $4\le a\le 10^4$ we only need to verify $b\le 8\cdot 10^9$. We will try to decrease the domain of verifications by additional efforts.

{\bf Case VI.} $4\le a\le 808$ and $10^6\le b\le 8\cdot 10^9$. Then
$$
g=ab-a-b=(a-1)(b-1)-1\ge 3\cdot \left(10^6-1\right)-1>10^6.
$$
By Lemma \ref{lem2} we have
\begin{align*}
\pi_{a,b}&\ge \pi(g;a,b)-\pi(b;a,b)\ge \frac{{\rm Li}(g)}{\varphi(a)}-\frac{{\rm Li}(b)}{\varphi(a)}+R(g)+R(b),
\end{align*}
where $|R(g)|<\frac{0.027g}{(\log g)^2}$ and $|R(b)|<\frac{0.027b}{(\log b)^2}$. Thus, with the help of computers we know that for $4\le a\le 808$,\footnote{We use the `{\it Minimize}' command of software to see that the minimum value is 42.5025 (obtained by taking $a=797$ and $b=10^6$).}
\begin{align*}
\pi_{a,b}&\ge \frac{{\rm Li}(g)}{\varphi(a)}-\frac{{\rm Li}(b)}{\varphi(a)}-\frac{0.027g}{(\log g)^2}-\frac{0.027b}{(\log b)^2}>\frac{1}{\varphi(a)}\int_b^{g}\frac{1}{\log t}dt-\frac{0.027g}{(\log g)^2}-\frac{0.027b}{(\log b)^2}>0.
\end{align*}

{\bf Conclusion.} From all above cases and discussions, it remains to show $\pi_{a,b}>0$ for

(i) $a>10^4$ and $g\le 5\cdot 10^8$ with the constraints $a\ge 155\log b$ and $a,b$ not primes.

(ii) $808<a\le 10^4$ and $10^6\le b\le 8\cdot 10^9$ with $a,b$ not primes.

(iii) $a\le 10^4$ and $b<10^6$ with $a,b$ not primes.

Verifications of (i) and (iii) could be done by the computer.\footnote{We use the software of computers to do these verifications. Since the ranges of $a$ and $b$ are finite, we use the computer to travel all over the situations. For a given pair $a$ and $b$ satisfying the restrictions, we just search for the the first prime $p<g$ of the form $ax+b\cdot1$ which helps the computer to check our purpose more quickly and efficiently. Luckily, all the situations contain a prime $p$ of the form $ax+b\cdot1$. We could verify (i) within several minutes. It took two notebook computers around six days to accomplish the verifications of (ii).}
It is, however, still a heavy work for the computer to verify (ii). We will prove (ii) by theoretical arguments.

{\bf Case VII.} $808<a<10^4$, $\frac{8\cdot 10^9}{a-1}+1<b\le 8\cdot 10^9$. Then 
$$
g=ab-a-b=(a-1)(b-1)-1>8\cdot 10^9-1.
$$
Hence $g\ge 8\cdot 10^9$ since $g$ is an integer. Moreover,
$
g<ab<8\cdot 10^{13}.
$
Trivially, we have
$$
\pi(b;a,b)\le \frac{b}{a}+1.
$$
Thus, by Lemma \ref{lem1} we have
\begin{align*}
\pi_{a,b}&\ge \pi(g;a,b)-\pi(b;a,b)\ge \frac{{\rm Li}(g)}{\varphi(a)}-\left(\frac{b}{a}+1\right)+R(g),
\end{align*}
where $|R(g)|<\frac{1}{840}g/(\log g)^2$. Hence,  by trivial estimates we get
\begin{align*}
\pi_{a,b}&\ge \frac{{\rm Li}(g)}{\varphi(a)}-\left(\frac{b}{a}+1\right)-\frac{g}{840(\log g)^2}>0
\end{align*}
since $b=(g+a)/(a-1)<g/808+809/808$ and $8\cdot 10^9\le g<8\cdot 10^{13}$.

{\bf Case VIII.} $808<a\le 10^4$, $10^6\le b\le 10^7$. Since $8\cdot 10^8<g<ab<10^{11}$, by Lemma \ref{newlemma} with $x=g$ we have\footnote{We use the `{\it Minimize}' command of software to see that the minimum value is 21647.8 (obtained by taking $a=9973$ and $b=10^6$).}
\begin{align*}
\pi_{a,b}&\ge \pi(g;a,b)-\pi(b;a,b)
\ge \frac{1}{\varphi(a)}\int_{b}^{g}\frac{1}{\log t}dt-\frac{5.5\sqrt{g}}{\log g}>0.
\end{align*}

We proved (ii) by {\bf Cases VII} and {\bf VIII} since $\frac{8\cdot 10^9}{a-1}+1<10^7$ for $a>808$.
\end{proof}

\section*{Acknowledgments}
We thank the anonymous referees for their very helpful comments. We also thank Honghu Liu for his careful reading the manuscript. 

The verifications by computers were partly carried over
separately by Zhidan Feng, Guilin Li, Deyong L\"u, Jianxing Song, Rui-Jing Wang, Hao Zhang and Guang-Liang Zhou. We deeply thank them for their generous help and efforts on our article.
We also thank Yu Zhang and Tengyou Zhu for their interests of this article.

H. Wang would like to thank Professors Xiumin Ren and Qingfeng Sun for their help and encouragement, the Alfr\'ed R\'enyi Institute of Mathematics for proving a great working environment and the China Scholarship Council for its financial support. 

The second named author is supported by National Natural Science Foundation of China  (Grant No. 12201544), Natural Science Foundation of Jiangsu Province, China (Grant No. BK20210784), China Postdoctoral Science Foundation (Grant No. 2022M710121).
The third named author is supported by the National Natural Science Foundation of China (Grant Nos.
12471005 and 12031008) and
the Natural Science Foundation of Shandong Province (Grant No. ZR2023MA003).

\end{document}